\documentclass[10pt]{amsart}
\usepackage{amsmath}
\usepackage{amsfonts}
\usepackage{amssymb}
\usepackage{amsthm}
\usepackage{mathrsfs}       
%
%
%
%
%
\usepackage{dsfont}
\usepackage[colorlinks=true]{hyperref}

\usepackage{fullpage}

\newcommand{\udd}{\mathrm{d}}
\newcommand{\ud}{\,\mathrm{d}}

\newcommand{\ovl}[1]{\overline{#1}}


\newcommand{\R}{\mathbb{R}}
\newcommand{\N}{\mathbb{N}}
\newcommand{\T}{\mathbb{T}}

\newcommand{\C}{\mathscr{C}}

\usepackage{enumitem}

\usepackage{cite}
\usepackage{cleveref}

\title[Degenerate Hypocoercivity on the Torus]{Quantitative rates of convergence to equilibrium for the Degenerate linear Boltzmann equation on the Torus}

\author[J.~Evans]{Josephine Evans$^{\ast}$}
\thanks{$^{\ast}$ \emph{Warwick Mathematics Institute}}
\begin{address}{Josephine Evans,
Warwick Mathematics Institute, Zeeman building, University of Warwick, CV4 7AL}
\end{address}

 \author[I.~Moyano]{Iv\'an Moyano$^{\ast \ast}$}
\thanks{$^{\ast\ast}$ \emph{LJAD, Universit\'e C\^ote d'Azur}}
\begin{address}{Iv\'an Moyano,
LJAD, Université de Nice Sophia Antipolis, 06108 Nice Cedex 02, France.}
\end{address}

\keywords{Convergence to equilibrium; Hypocoercivity; Linear Boltzmann Equation; Degenerate Hypocoercivity, Geometric Control Condition. MS classifications 35B40, 35Q49, 35Q70}

\newtheorem{thm}{Theorem}
\newtheorem{defn}{Definition}
\newtheorem{lemma}{Lemma}
\newtheorem{prop}{Proposition}
\newtheorem*{remark}{Remark}

\newtheorem{cor}{Corollary}
\begin{document}
\maketitle
\begin{abstract}
We study the linear relaxation Boltzmann equation on the torus with a spatially varying jump rate which can be zero on large sections of the domain. In \cite{BS13} Bernard and Salvarani showed that this equation converges exponentially fast to equilibrium if and only if the jump rate satisfies the geometric control condition of Bardos, Lebeau and Rauch \cite{BLR91}. In \cite{HL15} Han-Kwan and L\'{e}autaud showed a more general result for linear Boltzmann equations under the action of potentials in different geometric contexts, including the case of unbounded velocities. In this paper we obtain quantitative rates of convergence to equilibrium when the geometric control condition is satisfied, using a probabilistic approach based on Doeblin's theorem from Markov chains. 
\end{abstract}

\tableofcontents

\section{Introduction and Main Results}
In this article, we study the linear Boltzmann equation in the phase space $\Omega\times V$ (here we study models where $V = \mathbb{R}^d$ and where $V$ is a subset of $\mathbb{R}^d$ not containing zero), i.e., the system 
\begin{equation} 
\left\{ \begin{array}{ll}
\partial_t f + v \cdot \nabla_x f - \nabla_x W(x) \cdot \nabla_v f = \C(f),  & \textrm{in } (0,T) \times \Omega \times V, \\
f|_{t=0} = f_0, & \textrm{in } \Omega \times V,
\end{array} \right.
\label{eq:main} 
\end{equation} where the density function, $f = f(t,x,v)$, undergoes the action of the potential $W = W(x)$ and the collision term 
\begin{equation*}
\C(f) := \sigma(x) \int_V \left( p(v,v') f(v') - p(v',v) f(v) \right) \ud v',
\end{equation*} for some $\sigma \in C^0(\Omega)$, assumed to be non-negative, and $p \in C^1(V \times V)$ a transition kernel ($\int_V p(v,v') \mathrm{d}v =1$). Physically we can think of (\ref{eq:main}) as modelling a radiative transfer system where different parts of the space may have different transparencies, according to the scattering function $p = p(v,v')$. When $\sigma = \sigma(x)$ is a positive constant, (\ref{eq:main}) is the linear relaxation equation, linear BGK equation or linear Boltzmann equation. \par 

In this work we set $\Omega = \T^d$, the $d$-dimensional torus, with the usual identification
\begin{equation}
\T^d = \R^d /\mathbb{Z}^d.
\label{eq:torus}
\end{equation} Let $u_1$ be the uniform measure in $\T^d$. We stress that it is not possible (or necessary) to write down an explicit equilibrium for all of the examples given.

In the non-degenerate case $\sigma > 0$, the study of the trend to equilibrium of solutions to system (\ref{eq:main}) has been the object of many publications, using techniques such as hypocoercivity (see Section \ref{sec:main results} for details). In the degenerate case $\sigma \geq 0$, the problem of characterising the trend to equilibrium is deeply connected to the structure of the phase space $\T^d \times V$ and the geometry of the set $\left\{  \sigma > 0 \right\}$, as (\ref{eq:main}) reduces to a transport equation outside this region. In \cite{BS13b} Bernard and Salvarani showed that exponential convergence towards equilibrium cannot hold in general. On the other hand, the same authors proved in \cite{BS13} that the solutions to (\ref{eq:main}) with $\Omega \times V = \T^d \times \mathbb{S}^{d-1}$ and $W = 0$ converge to equilibrium exponentially in $L^1$ if and only if the support of $\sigma$ satisfies the geometric control condition (GCC for short), inspired from \cite{BLR91,LebeauOndes} and characterized in the following way.  

\begin{defn} \label{def:GCC1} 
The function $\sigma$ satisfies the Geometric Control Condition (GCC) if there exists $T = T(\sigma) > 0, \kappa >0$ such that
\begin{equation} 
\inf_{(x,v) \in \T^d \times V}\int_0^T \sigma(x-vt) \ud t \geq \kappa. 
\label{eq:GCC}
\end{equation}
\end{defn}

The case $W \not = 0$ and $\sigma \geq 0$ has been analysed by Han-Kwan and L\'eautaud in \cite{HL15}, where the action of the potential may generate many different dynamics. Considering the characteristic flow
\begin{equation}
\Phi_t (x,v) = \left( \Phi^X_t(x,v), \Phi^V_t(x,v) \right), \qquad t \in \R, 
\label{eq;characteristic flow}
\end{equation} where, for $(x,v)\in \T^d \times V$ given, $(\Phi^X_t,\Phi^V_t) = \left( \Phi^X_t(x,v), \Phi^V_t(x,v) \right) $ solve the characteristic equations
\begin{equation}
\begin{array}{ll}
 \frac{\ud }{\ud t}  \Phi^X_t = \Phi^V_t, &      \Phi^X_0=x, \\
  \frac{\ud }{\ud t}\Phi^V_t = - \nabla_x W(\Phi^X_t), & \Phi^V_0=v.
\end{array}
\label{eq:characteristic equations}
\end{equation}  The authors adapt the Geometric Control Condition to the action of a potential $W$ in the following way.

\begin{defn} \label{def:GCC2}
Let $W \in C^1(\T^d)$ be a given potential. A function $\sigma \in L^{\infty}(\T^d)$ satisfies the Geometric Control Condition (GCC) if there exist $T = T(\sigma,W)>0, \kappa > 0$ such that 
\begin{equation}
\inf_{(x,v) \in \T^d \times V} \int_0^T \sigma(\Phi^X_{-t}(x,v)) \ud t \geq \kappa, 
\end{equation} where $(\Phi_{t})_{t\geq \in \R}$ is the flow defined by (\ref{eq;characteristic flow}) and (\ref{eq:characteristic equations}).
\end{defn}

This definition is again inspired from the study of the controllability of the wave equation in \cite{BLR91,LebeauOndes} (see Section \ref{sec:main results} for more details). In this context, Han-Kwan and L\'eautaud give in \cite{HL15} conditions linking the collision kernel and the potential which imply either convergence to a steady state or exponential convergence to a steady state.  Let us mention that the results in \cite{HL15} are much more general (see Section \ref{sec:main results}) than the setting presented here.  \par

The methods developed in the works \cite{BS13, BS13b, HL15} do not yield constructive convergence rates for the trend to equilibrium. The goal of the present work is to obtain quantitative rates using different methods, inspired in tools from Markov chains.

\subsection{Main results}

We shall consider the following two regimes 
\begin{description}
\item[(R1)] $W=0$, $V$ is a subset of $\mathbb{R}^d$ which is bounded away from the origin (so that it is possible to satisfy the GCC), and there exist $v_* \in \mathbb{R}^d$ and $r_0, \gamma$ strictly positive constants such that
\[ p(v,v') \geq \gamma 1_{v \in B(v_*, r_0)}. \]
\item[(R2)] $W \in C^2(\T^d)$ with $\nabla W \neq 0, V = \R^d$, the scattering function is bounded below by a decreasing radial function which is always strictly positive 
\begin{equation}
p(v,v') \geq M(|v|), \quad \forall v,v' \in V,  
\label{eq:anisotropic scattering}
\end{equation} where $M(v) = e^{m(|v|^2)}$ for some decreasing and Lipschitz continuous function $m: \mathbb{R} \rightarrow \mathbb{R}$.
\end{description} 

\begin{remark}
We briefly mention that the case $(R1)$ above does not allow us to consider velocities on the unit sphere and this situation is well treated by related works. This is because of a technical barrier relating to the way we estimate lower bounds on the solution $f$ in the proof of the main theorem. We will explain in more detail in another remark after Lemma 1.
\end{remark}

\par  \vspace{0.5em}

In what follows we consider measure-valued solutions to (\ref{eq:main}) and we refer to Definition \ref{defn:measure valued linear Boltzmann} for details. We denote by $\mathscr{M}(\T^d \times V)$ the space of measures on $\T^d \times V$, which is a Banach space endowed with the \emph{total variation} norm, denoted $\| . \|_{TV}$ (see (\ref{eq:total variation}) for details). We denote $\mathscr{P}(\T^d \times V)$ (respectively $\mathscr{P}(V)$) the space of probability measures on $\T^d \times V$ (respectively on $V$). Finally, for a given potential $W \in C^1(\T^d)$ we denote by $(T_t)_{t\geq 0}$ the transport semigroup generated by the corresponding characteristic flow and for the sake of notation, we drop the explicit dependence on $W$.  
\begin{defn} \label{def:free transport}
The transport semigroup on $\mathscr{P}(\Omega \times V)$, noted $(T_t)_{t \geq 0}$, is defined by
\begin{equation*}
(T_t \mu_0)(\phi) = \iint_{\Omega \times V} \phi(\Phi_{t}(x,v)) \ud \mu_0 ( \ud x, \ud v),  \quad \forall \phi \in C_b(\Omega \times V),
\end{equation*} for any $\mu_0 \in \mathscr{P}(\Omega \times V)$ and $t\geq 0$.
\end{defn}  

\par 

\vspace{0.5em}
Before we state our main theorem we mention that since we deal with measures many times we have chosen to use compact notation so $\mu \leq \nu$ means $\mu(A) \leq \nu(A)$ for every $A$ in the $\sigma$-algebra or equivalently $\int \phi \mathrm{d}\mu \leq \int \phi \mathrm{d}\nu$ for every positive continuous bounded function $\phi$.

Now, let $V \subseteq \R^d$ be an open set, let $p$ be a scattering function on $V \times V$, let $W\in C^1(\T^d)$ be a given potential function and let $\sigma \in C^0(\T^d)$. Later we will prove Proposition \ref{prop:duhamel} showing that the unique measure-valued solution $(\mu_t)_t$ to the linear Boltzmann equation (\ref{eq:main}) is global in time. Our main result is the following. 

\begin{thm}\label{thm:abstract}
Let $(\mu_t)_t$ be a measure-valued solution to the linear Boltzmann equation (\ref{eq:main}) and let $T_t$ the transport semigroup associated to the potential $W\in C^1(\T^d)$. Assume that 
\begin{itemize}
\item[(i)] $\sigma \in \C^0(\T^d)$ satisfies the GCC (Definition \ref{def:GCC2}) for some time $T$ with a constant $\kappa$,

\item[(ii)] there exists a $u_2 \in \mathscr{P}(V)$ such that $ p(\cdot, v')\geq \beta_2 u_2$, and for this $u_2$ there exist $T_*>0$, $\beta_1 >0$ such that  
\begin{equation}
 \inf_{x_0 \in \T^d } \int_V T_t \left(\delta_{x_0} \otimes u_2 \right) \ud v  \geq \beta_1 u_1,    \qquad  
 \label{hyp:t}
\end{equation} for every $t \in [T_*, T_* +T]$ and $v'\in V$,
\item[(iii)] for $u_2$ in the previous condition and recalling that $u_1$ is the uniform measure on the torus, there exists $\beta_3>0$ such that 
\begin{equation}
T_t (u_1 \otimes u_2) \geq \beta_3 u_1 \otimes u_2, \qquad \forall t \in \R.
\label{hyp:u}
\end{equation}
\end{itemize} Then there exists a unique equilibrium state $\nu \in \mathscr{P}(\T^d \times V)$ for (\ref{eq:main}) and
\begin{equation}
\| \mu_t - \nu \|_{TV} \leq e^{-\lambda(t-2T-T_*)}\| \mu_0 - \nu \|_{TV}, \qquad \forall t \geq 2 T + T_*,
\label{eq:exponential decay thm 1}
\end{equation} with the quantitative rate
\begin{equation}
\lambda = - \frac{1}{2T+T_*} \log\left(1-\beta_1 \beta_2^2 \beta_3  \kappa^2 e^{- (2T+T_*)\|\sigma\|_{\infty} }\right).
\label{eq: quantitative lambda 1}
\end{equation} 
\end{thm}
\begin{remark}
Condition \ref{hyp:t} is a condition linking the possible post collision velocities and the transport map, in the sense that it gives a quantitative estimate of the fact that for every point $y \in \mathbb{T}^d$ there is a velocity in the range of $p$ such that $x_0$ will be mapped to $y$ by following the transport map with that velocity. Condition \ref{hyp:u} is less restrictive. We expect it to hold very generally (with similar proofs to the specific situations detailed in this paper e.g. Lemma \ref{lem:T4}) but it is helpful to have the quantitative constant in the arguments which follow.
\end{remark}

The lower bound in (\ref{hyp:t}) is a crucial hypothesis intimately linked to Doeblin's theorem and is key to obtain the exponential rate (\ref{eq: quantitative lambda 1}), as can be seen in Section \ref{sec:Doeblin}. \par 

\vspace{0.5em}

As a consequence of Theorem \ref{thm:abstract} we obtain explicit decay rates for the linear Boltzmann equation in the particular regimes described by \textbf{(R1)} and \textbf{(R2)}. In the first instance we have the following result. 

\begin{cor}\label{thm:main}
Let $\sigma \in C^0(\T^d)$ satisfy definition (\ref{def:GCC1}) and assume that \textbf{(R1)} holds. Then, if $(\mu_t)_{t \geq 0}$ is a measure solution to \eqref{eq:main} with initial datum $\mu_0 \in \mathscr{P}(\T^d \times V)$, if we write $T_* =  \frac{2\sqrt{d}}{r_0}$ we will have
\begin{equation}
\| \mu_t - \nu \|_{TV} \leq e^{-\lambda(t-2T-T_*)}\| \mu_0 - \nu \|_{TV}, \qquad \forall t \geq T_*,
\label{eq:exponential decay thm R1}
\end{equation} with the quantitative rate
\begin{equation}
\lambda = - \frac{1}{2T  + \frac{2\sqrt{d}}{r_0} } \log\left(1- \frac{\gamma^2 |B(v_*,r_0)|^2}{2^d}  \kappa^2 e^{- (2T+ \frac{2\sqrt{d}}{r_0})\|\sigma\|_{\infty} }\right).  
\label{eq: quantitative lambda R1}
\end{equation} 
\end{cor}

In order to refine the quantitative bound in (\ref{eq: quantitative lambda 1}), we give in Lemma \ref{lem:T1} some sufficient conditions on $V$ so that (\ref{hyp:t}) holds with concrete choices of $\beta_1, \beta_2, \beta_3$ and $T_*$. \par 

\vspace{0.5em}
%
%
%
%
%
%
%
%
%
%

\vspace{0.5em}

Our second result concerns the regime \textbf{(R2)}, with non-zero potentials.

\begin{cor} \label{thm:BGK2}
Let $\sigma \in C^0(\T^d)$ satisfying definition (\ref{def:GCC2}) and assume that \textbf{(R2)} holds. Then, if $(\mu_t)_{t \geq 0}$ is a measure-valued solution to \eqref{eq:main} with initial datum $\mu_0 \in \mathscr{P}(\T^d \times V)$, then there exists a $T_{**}>0$ and $\beta_{**}>0$ that we can make quantitative so that
\begin{equation}
\|\mu_t - \nu \|_{TV} \leq e^{-\lambda(t-2T-T_{**} )}   \|\mu_0 - \nu\|_{TV}, \quad \forall \, t \geq 0, 
\label{eq:exponential decay thm 3}
\end{equation} with the quantitative rate
\begin{equation}
\lambda = -\frac{1}{  2T+T_{**}  } \log \left( 1- \beta_{**} \kappa^2 e^{- (2T+T_{**}) \|\sigma\|_\infty } \right). 
\label{eq: quantitative lambda 3}
\end{equation}
\end{cor}

\begin{remark}
Observe that Corollary \ref{thm:BGK2} contains quantitative rates in terms of $\beta _{**}$ and $T, T_{**}$. We will give in Section \ref{sec:geometric assumptions} precise results with explicit rates and assumptions. It is difficult to make them as compact as in \ref{thm:main}
\end{remark}

\begin{remark}
Observe that we are assuming that $\sigma \in \C^0(\T^d)$ instead of just bounded and measurable. This is a technical assumption due to the fact that we are working with measured-valued solutions. See Section \ref{sec:Measured} for details. 

\end{remark}

\subsection{Previous works: Hypocoercivity, Doeblin's theorem and the geometric control condition}
\label{sec:main results}

\subsubsection{Hypocoercivity results when $\sigma$ is strictly positive}

Finding quantitative rates of convergence to equilibrium is a long-standing problem in kinetic theory. In the context of spatially inhomogeneous kinetic equations this is usually done using the tools of hypocoercivity, a name given by Villani in \cite{V09} to equations exhibiting convergence like $Ce^{-\lambda t}$ where $C \geq 1$. In the context of kinetic equations, hypocoercive behaviour  is typically found when considering spatially inhomogeneous equations where the dissipation of natural entropies vanishes on a large class of functions, the local equilibria, making it impossible to prove entropy-entropy production inequalities. Techniques to prove convergence for such equations based on hypoellipticity methods were developed in \cite{HN04, MN06, V09} as well as in many other works. \par 

When $\sigma$ is constant, equation (\ref{eq:main}) is a key example of a hypocoercive equation, shown to converge faster than any power of $t$ in $H^1$ norm in \cite{CCG03} using the framework of \cite{DV05}. It was then shown to converge exponentially fast to equilibrium in $H^1$ weighted against the equilibrium in \cite{MN06} and in $L^2$ weighted against the equilibrium in \cite{H07}. The convergence in weighted $L^2$ can also be seen as a result of the general theorem in \cite{DMS15}. There are several other works showing exponential convergence in various norms or for various more complex versions of this equation we mention in particular \cite{CCEY19} since this work uses Doeblin/Harris's theorem, which is also the tool we will apply to the spatially degenerate case.

\subsubsection{Hypocoercivity results when $\sigma$ can vanish}

The case where $\sigma = \sigma(x)$ is non constant and can vanish on areas of the spatial domain was first studied in \cite{BL55} although it is mentioned somewhat indirectly. This paper deals with non-equilibrium steady states for scattering operators and is a pioneering example of the use of probabilistic tools in statistical physics, but without quantitative rates. \par 

The more recent works on these spatially degenerate models was begun in \cite{DS09} where the authors study a model where $\sigma$ vanishes at a discrete set of points. In \cite{BS13b} Bernard and Salvarani showed that there are situations where the velocity space and form of $\sigma$ together mean that there is no exponential convergence towards equilibrium. On the other hand, Bernard and Salvarani proved in \cite{BS13b} that the solutions to (\ref{eq:main}) with $\Omega \times V = \T^d \times \mathbb{S}^{d-1}$ and $W = 0$ convergence to equilibrium exponentially in $L^1$ if and only if the support of $\sigma$ satisfies the geometric control condition of Definition \ref{def:GCC1}. This work is then extended in \cite{MK14} to give a more delicate sense of when exponential convergence to equilibrium will occur. The results in \cite{BS13b,MK14}, based on semigroup theory and abstract functional analysis, do not give quantitative rate of the convergence.   \par 

An equation related to (\ref{eq:main}), the $1d$ Goldstein-Taylor type model, has been studied in \cite{BS13c} where the authors do get explicit rates via comparing this equation to a damped wave equation for which explicit rates were obtained by Lebeau in \cite{LebeauOndes}. \par

The case where $V$ is unbounded is treated in \cite{HL15} by Han-Kwan and L\'eautaud, where the authors study linear Boltzmann type equations for a general class of collision operators and external confining potential terms on a closed, smooth, connected and compact Riemannian manifold $M$ (and in particular the torus). In this context, the authors identify geometric control conditions in the natural phase space $T^*M$ (similar to Definition \ref{def:GCC2} in the case $M = \T^d$) allowing to completely characterise the convergence to equilibrium and exponentially fast convergence to equilibrium for the corresponding linear Boltzmann equation. On the other hand, the techniques developed in \cite{HL15}, using phase-space and micro-local tools inspired from \cite{BLR91,LebeauOndes} do not give explicit rates of convergence. \par

In \cite{Helge} the kinetic Fokker-Planck case is studied and here it is shown that the GCC is not equivalent to exponential convergence to equilibrium.

\subsubsection{Doeblin's theorem}

	We use techniques which are inspired from Doeblin's theorem from Markov process theory (see \cite{H10} for a detailed exposition of this theorem). This theorem was used to show convergence to equilibrium for scattering equations in \cite{BL55}. It has been used several times to study convergence to equilibrium for kinetic equations in the context of Non-Equilibrium Steady States \cite{CELMM16} and is currently being used for studying the convergence to equilibrium for solutions of PDEs from mathematical biology. We mention in particular the works on the renewal equation \cite{G17}, and the neuron population model \cite{CY18}. This last paper contains a similar type of degeneracy to that studied in this work. In this context Doeblin's theorem and Harris's theorem have been extended to PDEs which do not conserve mass and/or have time-periodic limiting solutions rather than steady states, as in \cite{BCG17, BCG19}.

\subsubsection{The geometric control condition in control theory}. The geometric control condition mentioned in the previous section plays a fundamental role in the study of controllability and stabilisation properties of some linear PDEs, typically of hyperbolic type. The GCC condition was introduced in the seminal works \cite{RauchTaylor,BLR91,LebeauSchrodinger} in order to prove that the linear wave equation and the Schr\"odinger equation in a domain $\Omega \subset \R^d$, possibly with boundary, are exactly controllable from an open subset $\omega$ (or a subset of the boundary) as long as $\omega$ satisfies the geometric control condition. In \cite{BurqGerard1996} the GCC condition is proved to be necessary for the exact controllability of the wave equation. As for the stabilisation properties, the works \cite{BLR91, LebeauOndes, BurqGerard2017} prove that under the GCC condition one can expect an exponential trend to equilibrium for the wave equation with a localised damping, which is a crucial inspiration for the works \cite{BS13b,HL15} on the linear Boltzmann equation.

\subsection{Strategy and Outline }
 We prove Corollaries \ref{thm:main} and \ref{thm:BGK2}. As stated above the proof is based around Doeblin's theorem for Markov processes. The key element to executing a Doeblin argument is to find a time $t_*$ such that we can prove a lower bound on the solution of the equation at time $t_*$ which is independent of the initial condition. We give a detailed proof of this fact based on using Duhamel's formula. We then explain how this implies exponential convergence to equilibrium via Doeblin's theorem.

\subsubsection*{Acknowledgements} We would like to thank many people for some useful discussion. In particularly Jos\'{e} Ca\~nizo. We had useful discussions with Francesco Salvarani, Havva Yolda\c{s}, Chuqi Cao, Helge Dietert and Cl\'{e}ment Mouhot. The first author was supported by FSPM postdoctoral fellowship (between October 2018-July 2020)
and the grant ANR-17-CE40-0030 and then by a Leverhulme trust grant ECF-2021-134. Much of this was written while the first author was visiting the Hausdorff Research Institute for Mathematics on a Junior Trimester fellowship. We would like to thank them for their hospitality. The second author was supported by the ERC grant MAFRAN.

\section{Measured-valued solutions to the linear Boltzmann equation}
\label{sec:Measured}

Let us first define some notation in order to state our results. Given $(\mathcal{X},\Sigma)$ a measurable space, we denote by $\mathscr{M}(\mathcal{X})$ the set of Radon measures on $\mathcal{X}$. We denote by $\mathscr{P}(\mathcal{X})$ the set of probability measures on $\mathcal{X}$, i.e., all measures $\mu \in \mathscr{M}(\mathcal{X})$ satisfying $\mu (\mathcal{X}) = 1$ and $\mu(A) \geq 0$ for every measurable $A$. As usual the space $\mathcal{P}(\mathcal{X})$ is endowed with the weak topology, denoted $w-\mathscr{P}(\mathcal{X})$, induced by the family of semi-norms 
\begin{equation*}
\phi \mapsto \int_{\mathcal{X}} \phi(z)  \mu (\ud z), \qquad \forall \phi \in C_b(\mathcal{X}), 
\end{equation*} i.e., we are using test functions which are continuous and bounded on $\mathcal{X}$.  Recall that $\mu \in \mathscr{M}(\mathcal{X})$ is said to be non-negative whenever
\begin{equation}
\int_{\mathcal{X}} \phi(x) \mu(\ud z) \geq 0, \qquad \forall \phi \in C_b(\mathcal{X}; \R_+). 
\label{eq:positive measure}
\end{equation} The \textit{total variation} distance in $\mathscr{M}(\mathcal{X})$ is defined as usual as 
\begin{equation}
\| \mu \|_{TV} := \sup \left\{ \int_{\mathcal{X}} \phi(z) \mu( \ud z); \, \phi  \in C_b(\mathcal{X}) , \, \|\phi\|_\infty \leq 1   \right\}.
\label{eq:total variation}
\end{equation}

Consider next a phase space of the form $\mathcal{X} = \Omega \times V$, where $\Omega = \T^d$. If $\Sigma_{\Omega \times V}$ is the Borel $\sigma$-algebra on $\Omega \times V$, we denote by $\mathscr{L}_{\Omega \times V}$ the Lebesgue measure on $\Omega \times V$. If $A \in \Sigma_{\Omega \times V}$, we simply denote by $|A|$ the Lebesgue measure of $A$ if no confusion arises.

\subsection{Measure-valued solutions}

With the notation of the previous section, given $T>0$ and $\mu_0 \in \mathscr{P}(X \times V)$, we consider the transport equation 
\begin{equation} 
\left\{
\begin{array}{ll}
\partial_t \mu + v \cdot \nabla_x \mu -\nabla_x W \cdot \nabla_v \mu = 0, & \textrm{ in } (0,T) \times \Omega \times V, \\
\mu|_{t=0} = \mu_0, &  \textrm{ in } \Omega \times V.
\end{array} \right.
\label{eq:free transport} 
\end{equation}

\begin{defn}
A measure solution to (\ref{eq:free transport}) is an element of $C^0([0,T]; w - \mathscr{P}(\Omega \times V))$ (continuous funtions from $[0,T]$ to the space of probability measures endowed with the topology of weak convergence). We denote the solution $\mu_t = \mu_t( \ud x, \ud v)$, and it satisfies that for every $\phi \in C^1_c([0,T) \times \Omega \times V)$,
\begin{align*}
\int_0^T \iint_{\Omega \times V} \left(\partial_t \phi - v \cdot \nabla_x \phi + \nabla_x W \cdot \nabla_v \phi \right) \mu_t (\ud x \ud v) \ud t = - \iint_{\Omega \times V} \phi(0,x,v) \mu_0 (\ud x \ud v).
\end{align*}
\label{def:measure solutions} 
\end{defn} 

We can write any weak solution to (\ref{eq:free transport}) using the transport semigroup. In particular, $\mu_t = T_t \mu_0 (\ud x, \ud v)$ is a measure solution to (\ref{eq:free transport}).

In this article we work with the linear Boltzmann equation (\ref{eq:main}) in the sense of measures. Given $\mu \in \mathscr{P}(\Omega \times V)$ we set
\begin{align} 
m_{\sigma} \mu (\ud x, \ud v):= \sigma(x) \mu (\ud x, \ud v), \qquad L^+ \mu(v, \ud x) & := \int_V p(v, v') \mu (\ud x, \ud v'), 
\end{align} which are respectively the multiplication by $\sigma$ and the average in the variable $v \in V$. Given $\mu_0 \in \mathscr{P}(\Omega \times V)$ we set
\begin{equation} 
\left\{ \begin{array}{ll}
\partial_t \mu + v \cdot \nabla_x \mu - \nabla_x W(x) \cdot \nabla_v \mu = m_{\sigma} \left( L^+ \mu - \mu \right),  & \textrm{in } (0,T) \times \Omega \times V, \\
\mu|_{t=0} = \mu_0, & \textrm{in } \Omega \times V,
\end{array} \right.
\label{eq:main(notation section)}
\end{equation} which is a version of (\ref{eq:main}) for measured-valued solutions. 

\begin{defn}
A measure solution to (\ref{eq:main(notation section)}) is an element of $C^0([0,T]; w - \mathscr{P}(\Omega \times V))$, denoted $\mu_t = \mu_t(\ud x, \ud v)$, satisfying that for every $\phi \in C^1_c([0,T) \times \Omega \times V)$,
\begin{align*}
\int_0^T & \iint_{\Omega \times V} \left(\partial_t \phi - v \cdot \nabla_x \phi+ \nabla_x W \cdot \nabla_v \phi  + m_{\sigma}(\phi -\int_{V} p(v', v) \phi(x,v') \mathrm{d}v') \right) \mu_t(\ud x \ud v) \ud t \\ &=- \iint_{\Omega \times V} \phi(0,x,v) \mu_0 (\ud x \ud v).
\end{align*} 
\label{defn:measure valued linear Boltzmann}
\end{defn}

The following existence and uniqueness result is well-known and can be proved by a standard contraction mapping argument in the space  $C^0([0,T], \mathscr{P}-TV)$ as one would for Picard iteration for ODEs. The semingroup property, conservation of mass, continuous dependence on initial conditions etc. can then be proved directly from the solution and its representation. This procedure is standard (see for example chapter 21 of \cite{DautrayLions}).

\begin{prop} \label{prop:duhamel}
Given $T > 0$ and given $\mu_0 \in \mathscr{P}(\Omega \times V)$, there exists a unique measure-valued solution to (\ref{eq:main(notation section)}), namely $\mu_t = \mu_t(\ud x, \ud v)$. Moreover, this solution admits the representation 
\begin{equation}
\mu_t( \ud x, \ud v) = \exp \left( - \int_0^t \sigma(\Phi^X_{-t+s}(x,v))\ud s \right)(T_t \mu_0)(\ud x, \ud v) + S_t[ \mu](\ud x, \ud v) \label{eq:Duhamel } 
\end{equation} where $(T_t)_{t\geq 0}$ is given by Definition \ref{def:free transport} and
\begin{equation}
S_t [ \mu_t] (\ud x, \ud v) = \int_0^t \exp \left( -\int_s^t \sigma(\Phi^X_{-t+r}(x,v)) \ud r \right) (T_{t-s} m_{\sigma} L^+ \mu_s)(\ud x, \ud v) \ud s.
\label{eq:Duhamel source}
\end{equation} Denoting 
\begin{equation}
\mu_t (\ud x, \ud v) = \mathcal{P}_t \mu_0, \qquad t \geq 0, 
\label{eq:semigroup Boltzmann}
\end{equation} the family $(\mathcal{P}_t)_{t\geq 0}$ is a semigroup on $\mathscr{M}(\Omega \times V)$ enjoying the following properties
\begin{align}
& \| \mathcal{P}_t \mu_0 \|_{TV} = 1, & \forall \mu_0 \in \mathscr{P}(\Omega \times V), \label{eq:conservation of mass}\\
& \| \mathcal{P}_t \mu_0 - \mathcal{P}_t \ovl{\mu_0} \|_{TV} \leq  \| \mu_0 -  \ovl{\mu_0} \|_{TV}, & \forall \mu_0, \ovl{\mu_0} \in \mathscr{P}(\Omega \times V) \label{eq:contraction Boltzmann semigroup}.
\end{align}
\label{prop:well-posedness Boltzmann}
\end{prop}

\section{Proof of Theorem \ref{thm:main}}

The goal of this section is to prove Theorem \ref{thm:abstract} based on Doeblin's theorem.

\subsection{Propagation of lower bounds}

\begin{lemma}\label{lemmaimp(delta)} 
Suppose that the hypothesis of Theorem \ref{thm:abstract} are satisfied. Let $T_*,\beta_1,\beta_2$ be as in (\ref{hyp:t}) and let $\beta_3$ as in (\ref{hyp:u}). Let $\mu_t = \mu_t( \ud x, \ud v)$ be the solution to \eqref{eq:main} with initial datum 
\begin{equation}
\mu_0 = \delta_{x_0} \otimes \delta_{v_0}, 
\label{eq:initial Dirac}
\end{equation} for $(x_0,v_0) \in \T^d\times V$ given. Then, for $t = 2T + T_*$ we have
\begin{equation}
\mu_t(\ud x,\ud v) \geq  \beta_1 \beta_2^2 \beta_3 \kappa^2 e^{-(2T+T_*)\| \sigma \|_{\infty}}   u_1 \otimes u_2, \qquad \textrm{ in }\mathscr{M}(\T^d \times V).
\label{eq:minoration lemmaimp(delta)}. 
\end{equation}
\end{lemma}

\begin{proof} 

Using Duhamel's formula (\ref{eq:Duhamel }) we have that, for every $t \geq 0$,
\begin{align}
\mu_t ( \ud x, \ud v) &  = \exp \left( - \int_0^t \sigma(\Phi^X_{-t+s}(x,v))\ud s \right)(T_t \mu_0)( \ud x, \ud v) + S_t[\mu_t](\ud x, \ud v)  \label{eq:first lower bound} \\
		 &  \geq \exp \left( - \int_0^t \sigma(\Phi^X_{-t+s}(x,v))\mathrm{d}s \right)(T_t \mu_0)(\ud x,\ud v)  \nonumber \\
		 & \geq e^{- t \| \sigma \|_{\infty}} (T_t \mu_0)(\ud x,\ud v), \nonumber 
\end{align} as, according to (\ref{eq:Duhamel source}), 
\begin{equation*}
S_t[\mu_t](\ud x, \ud v)  \geq 0 \qquad \textrm{ in }\mathscr{M}(\Omega \times V).
\end{equation*} Substituting (\ref{eq:first lower bound}) into the second term in (\ref{eq:Duhamel }) we get 
\begin{align*}
\mu_t(\ud x, \ud v) &  \geq  \int_0^t \exp \left( - \int_s^t \sigma(\Phi^X_{-t+\tau}(x,v)) \ud \tau \right)(T_{t-s} m_{\sigma} L^+ \mu_s)(\ud x, \ud v) \ud s \\
& \geq   \int_0^te^{- (t-s) \| \sigma \|_{\infty}  } (T_{t-s} m_{\sigma} L^+ \mu_s)( \ud x, \ud v) \ud s \\
& \geq  e^{- t\| \sigma \|_{\infty}  } \int_0^t (T_{t-s} m_{\sigma} L^+ T_s \mu_0)(\ud x, \ud v) \ud s.
\end{align*} Now we can substitute this in a second time to get
\begin{equation}
\mu_t( \ud x, \ud v) \geq e^{- t \| \sigma\|_{\infty}  } \int_0^t \int_0^s (T_{t-s} m_{\sigma} L^+ T_{s-\tau} m_{\sigma} L^+ T_{\tau} \mu_0)(\ud x, \ud v) \ud \tau \ud s.
\label{eq:lower bound 2}
\end{equation} We notice that $T_t (\delta_{x_0} \times \delta_{v_0}) = \delta_{\Phi_t(x_0, v_0)} = \delta_{\Phi^X_t(x_0,v_0)} \otimes \delta_{\Phi^V_t(x_0,v_0)}$. Now using (\ref{eq:initial Dirac}) we may write 
\begin{align*}
T_{s- \tau} m_{\sigma} L^+ T_{\tau} \mu_0 & = T_{s- \tau} m_{\sigma} L^+ \left(  \delta_{\Phi^X_\tau(x_0,v_0)} \otimes \delta_{\Phi^V_\tau(x_0,v_0)} \right) \\
& = T_{s- \tau} m_{\sigma} \left(  p(\ud v, \Phi^V_\tau(x_0,v_0)) \delta_{\Phi^X_\tau(x_0,v_0)}(\ud x)  \right) \\
& = T_{s- \tau} \left( \sigma(x) \delta_{\Phi^X_\tau(x_0,v_0)}(\mathrm{d}x) p( \ud v, \Phi^V_\tau(x_0,v_0))  \right) \\
& = \sigma(\Phi^X_\tau (x_0,v_0))T_{s-\tau}\left(\delta_{\Phi^X_\tau(x_0,v_0)}(\mathrm{d}x) p( \ud v, \Phi^V_\tau(x_0,v_0))\right). 
\end{align*} Now assuming  that $ s- \tau \geq T_*$, the definition of $T_*$ in assumption (\ref{hyp:t}) gives
\begin{align*}
L^+T_{s-\tau} m_{\sigma} L^+ T_{\tau} \mu_0 & = L^+ \sigma(\Phi^X_\tau (x_0,v_0))T_{s-\tau}\left(\delta_{\Phi^X_\tau(x_0,v_0)}(\mathrm{d}x) p( \ud v, \Phi^V_\tau(x_0,v_0))\right) \\
& \geq  \beta_2\sigma(\Phi_{\tau}^X(x_0,v_0)) L^+ T_{s-\tau} (\delta_{\Phi_{\tau}^X(x_0,v_0)  } \otimes u_2  ) \\
& \geq \beta_2^2 \sigma(\Phi_{\tau}^X(x_0,v_0))  u_2 \int_V T_{s-\tau} (\delta_{\Phi_{\tau}^X(x_0,v_0)  } \otimes u_2  ) \ud v  \\
& \geq \beta_1 \beta_2^2 \sigma(\Phi_{\tau}^X(x_0,v_0)) u_1 \otimes u_2.
\end{align*} Hence,  
\begin{equation*}
m_{\sigma}L^+T_{s-\tau} m_{\sigma} L^+ T_{\tau} \mu_0 \geq \beta_1 \beta_2^2 \sigma(\Phi_{\tau}^X(x_0,v_0)) \sigma(x) u_1 \otimes u_2 .
\end{equation*} Now, using (\ref{hyp:u}) we have
\[ T_{t-s} m_{\sigma} L^+ T_{s-\tau} m_{\sigma} L^+ T_{\tau} \mu_0 = \beta_1 \beta_2^2 \beta_3 \sigma(\Phi^X_\tau(x_0,v_0))\sigma(\Phi^X_{-t+s}(x,v))u_1 \otimes u_2.\] 
Expanding this computation for any test function $\phi$ we have
\begin{align*}
\int\phi(x,v)  T_{t} \left( \sigma(x,v) u_1 \otimes u_2 \right)(\mathrm{d}x, \mathrm{d}v) & = \int \sigma(x,v) \phi(\Phi_t(x,v)) u_1 \otimes u_2 (\mathrm{d}x \mathrm{d}v) \\
& = \int \sigma( \Phi_t( \Phi_{-t}(x,v))) \phi(\Phi_t(x,v))(u_1 \otimes u_2)( \mathrm{d}x, \mathrm{d}v) \\
& = \int \sigma( \Phi_{-t}(x,v)) \phi(x,v)) T_t(u_1 \otimes u_2)( \mathrm{d}x, \mathrm{d}v)\\
& \geq \beta_3 \int \sigma( \Phi_{-t}(x,v)) \phi(x,v) (u_1 \otimes u_2)(\mathrm{d}x, \mathrm{d}v).
\end{align*} 
Now, taking $t = 2 T + T_*$ as in the statement and integrating (\ref{eq:lower bound 2}) with respect to $ \tau \in [0,T], s \in [T+T_*, 2T+T_*]$ we get
\begin{align*} 
\mu_t (\ud x, \ud v ) & \geq \beta_1 \beta_2^2 \beta_3 e^{-(2T+T_*)\| \sigma\|_{\infty} } \int_{T+T_*}^{2T+T_*} \int_0^T \sigma(\Phi^X_{-(t-s)}(x,v)) \sigma(\Phi^X_\tau(x_0,v_0))  u_1 \otimes u_2  \ud \tau \ud s  \\
& \geq  \beta_1 \beta_2^2 \beta_3 \kappa^2 e^{-(2T+T_*)\| \sigma \|_{\infty}}   u_1 \otimes u_2,
\end{align*}
whence (\ref{eq:minoration lemmaimp(delta)}) follows. Here we note that we used the GCC on both the forwards and backwards flow and this is possible since the map $\Phi_t$ is invertible and $\Phi_{t}^{-1} = \Phi_{-t}$.
\end{proof} 

\begin{remark}
We are able to see at this point why it is not possible to apply our techniques when $V$ is the unit sphere. This is because by dimensional concerns our estimate in this case would need to involve at least three iterations of Duhamel's formula. After doing this $\sigma$ and the transport map become entangled in a way which means we cannot use the GCC to get a lower bound.
\end{remark}

The next result is an extension of Lemma \ref{lemmaimp(delta)}, valid for Dirac masses, to any initial data that is a probability measure.

\begin{lemma}\label{lemmaimp}
Under the same hypothesis of Lemma \ref{lemmaimp(delta)}, let $\mu_0 \in \mathscr{P}(\T^d \times V)$ and let $\mu_t$ be the associated solution to \eqref{eq:main(notation section)}. Then, for $t = 2T + T_*$ we have
\begin{equation}
\mu_t(\ud x,\ud v) \geq \beta_1 \beta_2^2 \beta_3 \kappa^2 e^{- t\| \sigma \|_{\infty} }  u_1 \otimes u_2 \qquad \textrm{ in }\mathscr{M}(\T^d \times V).
\label{eq:minoration lemmaimp} 
\end{equation} 
\end{lemma}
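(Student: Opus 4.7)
The plan is to exploit the linearity of the semigroup $(\mathcal{P}_t)_{t\geq 0}$ defined in \eqref{eq:semigroup Boltzmann} and to disintegrate an arbitrary initial probability measure into Dirac masses, reducing the problem to the previous lemma.

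First, I would write the initial datum as a superposition of Dirac masses: for any test function $\phi \in C_b(\Omega \times V)$,
\begin{equation*}
\mu_0(\phi) = \iint_{\Omega \times V} \phi(x_0,v_0) \, \mu_0(\mathrm{d}x_0,\mathrm{d}v_0) = \iint_{\Omega \times V} \left(\delta_{(x_0,v_0)}(\phi)\right) \mu_0(\mathrm{d}x_0,\mathrm{d}v_0).
\end{equation*}
By uniqueness and linearity of measure-valued solutions to \eqref{eq:main(notation section)} (Proposition \ref{prop:well-posedness Boltzmann}), and by a standard Fubini-type argument applied to the Duhamel representation \eqref{eq:Duhamel }--\eqref{eq:Duhamel source}, one obtains the disintegration
\begin{equation*}
\mathcal{P}_t \mu_0 (\mathrm{d}x,\mathrm{d}v) = \iint_{\Omega \times V} \mathcal{P}_t(\delta_{(x_0,v_0)})(\mathrm{d}x,\mathrm{d}v)\, \mu_0(\mathrm{d}x_0,\mathrm{d}v_0),
\end{equation*}
understood in the weak sense against $\phi \in C_b(\Omega \times V)$. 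The measurability of $(x_0,v_0) \mapsto \mathcal{P}_t(\delta_{(x_0,v_0)})(\phi)$ is guaranteed by the continuous dependence of the semigroup on initial data encoded in the contraction estimate \eqref{eq:contraction Boltzmann semigroup}.

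Second, I would apply Lemma \ref{lemmaimp(delta)} to each Dirac initial datum. For $t = 2T+T_\ast$ (in the setting \textbf{(R1)}) or $t=2T+T_{\ast\ast\ast}$ (in the setting \textbf{(R2)}), and for every $(x_0,v_0) \in \Omega \times V$,
\begin{equation*}
\mathcal{P}_t(\delta_{(x_0,v_0)})(\mathrm{d}x,\mathrm{d}v) \geq \beta \kappa^2 e^{-t\|\sigma\|_\infty}\, \nu(\mathrm{d}x,\mathrm{d}v).
\end{equation*}
Since this lower bound is independent of $(x_0,v_0)$, integrating against the probability measure $\mu_0$ yields, for every non-negative $\phi \in C_b(\Omega \times V;\R_+)$,
\begin{equation*}
\mathcal{P}_t \mu_0(\phi) \geq \beta \kappa^2 e^{-t\|\sigma\|_\infty} \nu(\phi) \iint_{\Omega \times V} \mu_0(\mathrm{d}x_0,\mathrm{d}v_0) = \beta \kappa^2 e^{-t\|\sigma\|_\infty} \nu(\phi),
\end{equation*}
using that $\mu_0 \in \mathscr{P}(\Omega \times V)$ has unit mass. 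In the sense of \eqref{eq:positive measure} this is exactly \eqref{eq:minoration lemmaimp}.

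The only delicate point is justifying the disintegration formula at the level of measures rather than just $L^1$ functions; the main obstacle is the measurability of the map $(x_0,v_0)\mapsto \mathcal{P}_t(\delta_{(x_0,v_0)})$ in the weak topology. This follows by writing the Duhamel series \eqref{eq:Duhamel }--\eqref{eq:Duhamel source} as a convergent Neumann-type expansion whose terms are explicit integrals along characteristics, each of which is manifestly continuous in $(x_0,v_0)$; the uniform-in-time contraction \eqref{eq:contraction Boltzmann semigroup} ensures that the series converges in total variation uniformly in $(x_0,v_0)$, so passage to the limit preserves measurability.
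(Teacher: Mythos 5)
Your proof follows essentially the same strategy as the paper: disintegrate $\mu_0$ into Dirac masses, push each forward under $\mathcal{P}_t$, apply Lemma~\ref{lemmaimp(delta)} pointwise, and integrate against $\mu_0$; the paper justifies the disintegration identity exactly as you sketch, by showing via Fubini that $\nu_t := \iint \mathcal{P}_t\delta_{(x_0,v_0)}\,\mu_0(\mathrm{d}x_0,\mathrm{d}v_0)$ satisfies the weak formulation of Definition~\ref{defn:measure valued linear Boltzmann} and then invoking uniqueness from Proposition~\ref{prop:well-posedness Boltzmann}.

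One small caveat about your measurability discussion: the TV contraction \eqref{eq:contraction Boltzmann semigroup} does \emph{not} yield continuity of $(x_0,v_0)\mapsto \mathcal{P}_t\delta_{(x_0,v_0)}(\phi)$, because $\|\delta_{(x_0,v_0)}-\delta_{(x_0',v_0')}\|_{TV}=2$ whenever the base points differ, so Dirac masses are TV-isolated from one another and the contraction gives no information as the base point varies. Your second justification --- expanding the Duhamel formula \eqref{eq:Duhamel }--\eqref{eq:Duhamel source} as a Neumann series of explicit transport integrals, each of which is weak-$*$ continuous in $(x_0,v_0)$ for $\sigma\in C^0(\T^d)$, with absolute convergence controlled by $(\|\sigma\|_\infty t)^n/n!$ --- is the sound one, and is what implicitly underwrites the Fubini step in the paper's argument as well.
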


\begin{proof}
Let $\mu_0$ and let $\mu_t$ as in the statement. 
According to (\ref{eq:semigroup Boltzmann}), we can write $ \mu_t = \mathcal{P}_t \mu_0 $. We claim that it suffices to prove that
\begin{equation}
\mu_t  = \iint_{\T^d \times V} \left(\mathcal{P}_t \delta_{x_0, v_0} \right) \mu_0(\udd x_0, \udd v_0).  
\label{eq:claim lemma imp}
\end{equation}
 We are writing this as we were trying to avoid introducing lots of notation from Markov process theory (in this case Markov transition kernels). We can define the integral above by duality.
\begin{align*}
\iint_{\T^d \times V} \phi(x, v ) \mu_t( \mathrm{d}x, \mathrm{d}v) = \iint_{\T^d \times V} \left( \iint_{\T^d \times V} \phi(x,v)  \left(\mathcal{P}_t \delta_{x_0, v_0} \right)( \mathrm{d}x , \mathrm{d}v )\right) \mu_0 ( \udd x_0, \udd v_0), \forall \phi \in C^0(\Omega \times V).
\end{align*}
We note here that $\iint_{\T^d \times V} \phi(x,v)  \left(\mathcal{P}_t \delta_{x_0, v_0} \right)( \mathrm{d}x , \mathrm{d}v )$ is a function of $(x_0, v_0)$ and it is continuous as $\mathcal{P}_t$ is a continuous map from the space of probability measures with the topology of weak convergence to itself. Therefore if $(x_n, y_n) \rightarrow (x_0, y_0)$ then we will have $\delta_{(x_n, y_n)} \rightarrow \delta_{(x_0, y_0)}$ weakly so $\mathcal{P}_t(\delta_{(x_n, y_n)}) \rightarrow \mathcal{P}_t(\delta_{(x_0, y_0)})$ weakly.

 If (\ref{eq:claim lemma imp}) holds, Lemma \ref{lemmaimp(delta)} implies
\begin{align*}
\mathcal{P}_t \mu & = \iint_{\T^d \times V} \left(\mathcal{P}_t \delta_{x_0,v_0}\right) \mu_0(\ud x_0, \ud v_0) \\
& \geq \beta_1 \beta_2^2 \beta_3 \kappa^2 e^{-t \|\sigma\|_{\infty}} \iint_{\T^d \times V}  u_1 \otimes u_2 \mu_0(\ud x_0, \ud v_0)  \\
& = \beta_1 \beta_2^2 \beta_3 \kappa^2 e^{-t \|\sigma\|_{\infty}} u_1 \otimes u_2. 
\end{align*} Next, in order to prove (\ref{eq:claim lemma imp}), we observe that it is sufficient to check that 
\begin{equation*}
\nu_t := \iint_{\T^d \times V} (\mathcal{P}_t \delta_{x_0,v_0})\mu_0(\mathrm{d}x_0, \mathrm{d}v_0)
\end{equation*} is indeed a measure-valued solution to (\ref{eq:main(notation section)}) with initial datum $\mu_0$, as uniqueness of solutions (Proposition \ref{prop:well-posedness Boltzmann}) would imply $\nu_t = \mu_t$ and a fortiori (\ref{eq:claim lemma imp}). \par

According to Definition \ref{defn:measure valued linear Boltzmann}, let $\phi \in C^1_c((0,T] \times \T^d \times V)$. As $\phi$ and $\nabla_{t,x} \phi$ are bounded and $p$ is $C^1$, then 
\begin{equation*}
P\phi =  \left(\partial_t \phi - v \cdot \nabla_x \phi+ \nabla_x W \cdot \nabla_v \phi  + m_{\sigma}(\phi -\int_{V} p(v', v) \phi(x,v') \mathrm{d}v') \right) \in C^1((0,T] \times \T^d \times V). 
\end{equation*} Then, using Fubini's theorem, 
\begin{align*}
& \int_0^T \iint_{\T^d\times V}  \left(\partial_t \phi - v \cdot \nabla_x \phi+ \nabla_x W \cdot \nabla_v \phi  + m_{\sigma}(\phi -\int_{V} p(v', v) \phi(x,v') \mathrm{d}v') \right) \nu_t(\udd x, \udd v)  \\
& \qquad = \int_0^T \iint_{\T^d \times V}  P \phi   \left( \iint_{\T^d \times V} \mathcal{P}_t \delta_{x_0,v_0} \mu_0 (\udd x_0, \udd v_0) \right)(\udd x, \udd v)  \\
& \qquad = \iint_{\T^d \times V} \left( \int_0^T   \iint_{\T^d \times V} P \phi  \left( \mathcal{P}_t \delta_{x_0,v_0} \right)(\udd x, \udd v) \right)  \mu_0 (\udd x_0, \udd v_0) \\
& \qquad =  - \iint_{\mathbb{T}^d \times V} \phi(0,x_0,v_0) \mu_0(\udd x_0, \udd v_0 ).
\end{align*} Note here that $ \iint_{\T^d \times V} P \phi  \left( \mathcal{P}_t \delta_{x_0,v_0} \right)(\udd x, \udd v) $ is a bounded measurable function of $(t,x_0,v_0)$ (as it is continuous) and we use Fubini's theorem to commute the order of integrals with this function as the integrand. We also note here that we are integrating against $\mu_0$ which has finite mass so the fact that $ \iint_{\T^d \times V} P \phi  \left( \mathcal{P}_t \delta_{x_0,v_0} \right)(\udd x, \udd v) $ is bounded implies it is in $L^1$.
\end{proof}

\subsection{Doeblin type argument and exponential decay}
\label{sec:Doeblin}

Now we want to conclude the proof of Theorem \ref{thm:main} using Doeblin's theorem, which states the following result, whose proof can be found for instance in \cite[Thm 2.1]{CanizoMischler}.
 
\begin{thm}[Doeblin]
Let $S : \mathscr{M}(\T^d \times V) \rightarrow \mathscr{M}(\T^d \times V)$ be a stochastic operator satisfying that there exist $0 < \alpha < 1$ and $ \eta  \in \mathscr{P}(\T^d \times V)$ such that
\begin{equation}
S\mu \geq \alpha \eta, \qquad \forall \mu \in \mathscr{P}(\T^d \times V). 
\label{eq:Doblin bound 1}
\end{equation} Then $S$ has a unique stationary state $\mu^* \in \mathscr{P}(\T^d \times V)$ which is exponentially stable, and more
generally
\begin{equation}
\forall k \in \N, \qquad \| S^k \mu_1 - S^k \mu_2 \|_{TV} \leq (1-\alpha)^k \| \mu_1 - \mu_2 \|_{TV},
\label{eq:Doblin bound 2}
\end{equation} for all $\mu_1, \mu_2 \in \mathscr{P}(\T^d \times V)$. 
\label{thm:Doblin}
\end{thm}

\begin{proof}[Proof of Theorem \ref{thm:main}]

Let $t_*=2T+T_*$ and set
\begin{equation*}
\alpha := \beta_1 \beta_2^2 \beta_3 \kappa^2 e^{- t_* \|\sigma\|_{\infty}  }.
\end{equation*} We note that $\alpha <1$ since conservation of mass implies the mass of a lower bound on $\mu_t$ cannot be larger than 1. Set 
\begin{equation*}
S = \mathcal{P}_{t^*}, \qquad \eta = u_1 \otimes u_2.
\end{equation*}  Now, thanks to (\ref{eq:minoration lemmaimp}), the lower bound (\ref{eq:Doblin bound 1}) holds and thanks to Doeblin's theorem (cf. Theorem \ref{thm:Doblin}) we know that a unique equilibrium $\nu$ exists and furthermore (\ref{eq:Doblin bound 2}) yields
\begin{equation}
\forall k \in \N, \qquad \| \mathcal{P}_{kt^*} \mu_0 - \nu \|_{TV} \leq (1-\alpha)^k \| \mu_0 - \nu \|_{TV},
\label{eq:Doblin bound 3}
\end{equation} for every $\mu_0 \in \mathscr{P}(\T^d \times V)$. Let $t > t_*$ and set $k \in \N$ be such that
\begin{equation*}
k < \frac{t}{t_*} \leq k + 1.
\end{equation*} Then, using (\ref{eq:Doblin bound 3}),
\begin{align*}
\| \mathcal{P}_{t} \mu_0 - \nu \|_{TV} & = \| \mathcal{P}_{t} \mu_0 - \mathcal{P}_{t} \nu \|_{TV} \\
& \leq  \| \mathcal{P}_{kt_*} \mu_0 - \mathcal{P}_{kt_*} \nu \|_{TV} \\
& \leq  (1-\alpha)^k \| \mu_0 - \nu \|_{TV} \\
& \leq \exp\left( \frac{t-t_*}{t_*} \log( 1 - \alpha) \right) \| \mu_0 - \nu \|_{TV},
\end{align*} where we have used that, thanks to the choice of $k$, 
\begin{equation*}
(k+1) \log(1 - \alpha) \leq \frac{t}{t_*}  \log(1 - \alpha).
\end{equation*} This gives (\ref{eq:exponential decay thm 1}) with the rate (\ref{eq: quantitative lambda 1}).

\end{proof}

\section{Quantitative decay estimates in the regimes \textbf{(R1)} and \textbf{(R2)}}
\label{sec:geometric assumptions}

In this section we explain how the situations described by \textbf{(R1)} and \textbf{(R2)} imply a quantitative lower bound of the form (\ref{hyp:t}). As a consequence, Theorem \ref{thm:abstract} imply Corollaries \ref{thm:main} and \ref{thm:BGK2}.

\subsection{Proof of Corollary \ref{thm:main}}

\begin{lemma}\label{lem:T1} Assume that assumption \textbf{(R1)} is satisfied for some $v_* \in \R^d$ and $\gamma, r_0>0$ given. Then, the lower bound (\ref{hyp:t}) holds with  
\begin{equation}
T_* = \frac{2\sqrt{d}}{r_0}, \qquad \beta_1 =  2^{-d}, \qquad \beta_2 = \gamma|B(v_*,r_0)|, \qquad u_2 = \frac{1}{|B(v_*,r_0)|}\mathds{1}_{B(v_*,r_0)} \ud v.
\end{equation} 
\end{lemma}

\begin{proof}[Proof of Lemma \ref{lem:T1}] 
 In order to verify the lower bound in (\ref{hyp:t}), let $x_0 \in \T^d$ and write
\begin{align*}
\int_{B(v_*,r_0)} T_t \left( \delta_{x_0} \otimes u_2 \right) \mathrm{d}v = & \frac{1}{|B(v_*,r_0)|} \int_{B(v_*,r_0)} T_t \left( \delta_{x_0} \otimes \mathds{1}_{v \in B(v_*, r_0)} \right) \mathrm{d}v \\
=& \frac{1}{|B(v_*,r_0)|} \int_{B(v_*,r_0)}  \delta_{x_0}(x-vt) \mathds{1}_{v \in B(v_*, r_0)} \mathrm{d}v \\
=& \frac{1}{t^d|B(v_*,r_0)|}  \int_{B(x-tv_*, tr_0)} \delta_{x_0}(y) \mathds{1}_{y \in B(x-tv_*, tr_0)} \mathrm{d}y \\
=& \frac{1}{t^d|B(v_*,r_0)|}  \mathds{1}_{ x_0 \in B(x -tv_*, tr_0)}.
\end{align*} Now we need to recall that $x \in \mathbb{T}^d$ so we want to understand this ball as $B(x-tv_*, tr_0) \subseteq \mathbb{T}^d$. Since we are interested in this as a distibution on $\mathbb{T}^d$ the easiest way is to look at it by integrating against an arbitrary smooth 1-periodic function on $\mathbb{R}^d$, namely $\phi$. In this next section let $Q(x, r)$ be the union of all the open hypercubes with integer vertices contained inside $\bar{B}(x,r)$ then
\begin{align*}
 \int_{\mathbb{R}^d} \phi(x)  \mathds{1}_{x \in B(x_0 +tv_*, tr_0)} \mathrm{d}x \geq &  \int_{\mathbb{R}^d} \phi(x) \mathds{1}_{x \in Q(x_0+tv_*, tr_0)} \mathrm{d}x \\
=& |Q(x_0+tv_*, tr_0)| \int_{\mathbb{T}^d}\phi(x)\mathrm{d}x.
\end{align*}
Now we can see that if $r>\sqrt{d}$ then $B(x,r) \setminus Q(x,r) \subset B(x,r) \setminus B(x,r-\sqrt{d})$. We have $|Q(x,r)| = |B(x,r)| - |B(x, r)\setminus Q(x,r)| \geq |B(x,r)| - |B(x,r) - B(x,r-\sqrt{d})| = |B(x,r-\sqrt{d})|$.  Consequently, if $tr_0 > \sqrt{d}$ we have
\[ |Q(x_0+tv_*, tr_0)| \geq |B(x_0+tv_*, tr_0-\sqrt{d})| = |B(0,1)| (tr_0- \sqrt{d})^d. \] This means that as a distribution on the torus
\[ \int_V T_t \left( \delta_{x_0} \otimes u_2 \right) \mathrm{d}v \geq \frac{|B(0,1)| (tr_0-\sqrt{d})^d}{t^d |B(v_*,r_0)|} = \left(1 - \frac{\sqrt{d}}{r_0 t}\right)^d . \] Therefore for 
\begin{equation*}
t \geq  \frac{2\sqrt{d}}{r_0} 
\end{equation*} we have that (recalling that $u_1$ is the uniform measure on the torus)
\[ \int_V T_t \left( \delta_{x_0} \otimes u_2 \right) \mathrm{d}v \geq 2^{-d} u_1. \]

\end{proof}

\begin{lemma}\label{lem:T5}
Assume that assumption \textbf{(R1)} is satisfied. Then, (\ref{hyp:u}) in the statement of Theorem \ref{thm:abstract} holds with $\beta_3=1$. 
\end{lemma}

\begin{proof}
According to \textbf{(R1)}, $W = 0$ in this case and therefore $(T_t)_{t\geq 0}$ reduces to the free transport semigroup, which preserves spatially homogeneous distributions. Hence, (\ref{hyp:u}) follows. 
\end{proof}

\begin{proof}[Proof of Corollary \ref{thm:main}]

As assumption \textbf{(R1)} is satisfied for some $v_* \in \R^d$ and $\gamma, r_0>0$ by hypothesis, thanks to Lemmas \ref{lem:T1} and \ref{lem:T5}, the assumptions (\ref{hyp:t}) and (\ref{hyp:u}) hold. Now, applying Theorem \ref{thm:abstract} we get exponential decay with the explicit rate (\ref{eq: quantitative lambda 1}). Using the values found in Lemmas \ref{lem:T1} and \ref{lem:T5}, we get

\begin{align*}
\lambda & = - \frac{1}{2T+T_*} \log\left(1-\beta_1 \beta_2^2 \beta_3  \kappa^2 e^{- (2T+T_*)\|\sigma\|_{\infty} }\right) \\
& = - \frac{1}{2T +  \frac{2\sqrt{d}}{r_0} } \log\left(1- \frac{\gamma^2 |B(v_*,r_0)|^2}{2^d}  \kappa^2 e^{- (2T+ \frac{2\sqrt{d}}{r_0})\|\sigma\|_{\infty} }\right),
\end{align*} which is (\ref{eq: quantitative lambda R1}).

\end{proof}

\subsection{Proof of Corollary \ref{thm:BGK2}}

\begin{lemma}\label{lem:T3}
For $W$ smooth, periodic and positive, and for $p(v',v) \geq M(|v|)$ for strictly positive, decreasing $M$, we can find  $T_{**}< \infty$ and $\beta_{**} \in (0,1)$ such that for all $t \in [T_{**}, T_{**}+T],$ where $T$ is the time from the GCC, we have
\begin{equation}
\inf_{x_0 \in \T^d } \int_{\mathbb{R}^d} T_t \left(\delta_{x_0} \otimes p(v', v) \right) \ud v  \geq \beta_{**} u_1.
\label{hyp:t2}
\end{equation} Here $T_{**}=1/2$ and 
\[ \beta_{**} =  \exp \left( -(T+1)\left(1+ \|\mbox{Hess}(W)\|_\infty\right)\right)M(4(1+\|\nabla W\|_\infty)+5\|\nabla W \|_\infty T). \]
\end{lemma}

\begin{proof}[Proof of Lemma \ref{lem:T3}]
The strategy of this lemma is to split a time $t \in [T, 1+T]$ into the form $s+r$ where $s \in [1/2,1]$ and $r \in [T-1/2, T]$. For the part of the transport semigroup corresponding to the time of length $r$ we show that if we start with sufficient mass in large velocities we will retain a large amount of mass in large velocities. More precisely mass cannot move to velocities more than $T \|\nabla_x W\|_\infty$ away from the starting velocity and since the torus is compact $\|\nabla_x W\|_\infty <\infty$. Then for the part of the transport semigroup corresponding to the time of length $s$ we approximate the Hamiltonian flow by free transport and use the fact that for sufficiently large velocities the free transport maps moves mass to all possible $x$.

In this proof we first write the short time estimate approximating by free transport, then the longer time estimate separately. We then put the two together at the end.

We begin by looking at short times. We can use a Taylor expansion to write
\begin{equation}\label{eq:approxT}
\Phi^X_{-t}(x,v) = x-vt +\frac{1}{2}t^2 R(x,v,t),
\end{equation}
Here $R(x,v,t)_i = \partial_{x_i} W(\phi^X_{-s}(x,v))$ for some $s \in [0,t)$. We want to consider this map as free transport plus a perturbation. If we start with sufficiently large velocities, and since $\nabla_xW$ is bounded the contribution from $vt$ will be much larger than the contribution from $\nabla_x W$.
We will first consider for some $0<R_1<R_2$ the marginal measure given by 
\[ \int_{\mathbb{R}^d} T_t \left( \delta_{x_0} \times 1_{R_1 \leq |v| \leq R_2} \right) \mathrm{d}v.\] We study this by integrating it against a test function. We choose a smooth, positive test function $\psi(x)$ which is a function on all of $\mathbb{R}^d$ which is $1$-periodic in every direction. The periodicity of $\psi$ allows us to capture the dynamics of $x$ and $v$ mixing with the $x$ variable on the torus. Therefore we have
\begin{align*}
\int_{\mathbb{T}^d} \int_{\mathbb{R}^d} \psi(x) T_t\left( \delta_{x_0} \times 1_{R_1 \leq |v| \leq R_2} \right) \mathrm{d}v \mathrm{d}x =& \int_{\mathbb{T}^d} \int_{\mathbb{R}^d} \psi(x) \delta_{x_0}(\Phi^X_{-t}(x,v)) 1_{R_1 \leq |\Phi^V_{-t}(x,v)| \leq R_2} \mathrm{d}v \mathrm{d} x \\
=& \int_{\mathbb{T}^d} \int_{\mathbb{R}^d} \psi(\Phi^X_t(y,u)) \delta_{x_0}(y) 1_{R_1 \leq |u| \leq R_2} \mathrm{d}u \mathrm{d}y \\
=& \int_{\mathbb{R}^d} \psi(\Phi^X_t(x_0,u))1_{R_1 \leq |u| \leq R_2} \mathrm{d}u.
\end{align*} We used here the change of variables $(y,u) = (\Phi^X_{-t}(x,v), \Phi^V_{-t}(x,v))$ which has Jacobian equal to 1. We now use equation \eqref{eq:approxT}, and the fact that $|\nabla_x W| \leq G$,  to see that for $t \in (1/2,1)$ we have
\begin{equation} \label{eq:rbound} 1_{R_1 \leq |u| \leq R_2} \geq 1_{R_1 +G \leq |\Phi^X_t(x_0,u)-x_0| \leq R_2/2 -G}. \end{equation} Expanding on this we have that if $|x-\Phi^X| \leq R_2/2 -G$ and $t^2|R| \leq t^2G$ then $t|u| = |x- \Phi^X - t^2 R/2| \leq R_2/2 \leq t R_2$, and similarly if $|x- \Phi^X | \geq R_1 +G$ then $t|u| \geq R_1 \geq t R_1$. We then substitute this in to get that
\begin{align*}
\int_{\mathbb{T}^d} \int_{\mathbb{R}^d} \psi(x) T_t\left( \delta_{x_0} \times 1_{R_1 \leq |v| \leq R_2} \right) \mathrm{d}v \mathrm{d}x \geq & \int_{\mathbb{R}^d} \psi(\Phi^X_t(x_0,u)) 1_{2R_1 +G \leq |\Phi^X_t(x_0,u)-x_0| \leq R_2/2 -G} \mathrm{d}u \\
=& \int_{\mathbb{R}^d} \psi(x) 1_{2R_1 +G \leq |x-x_0| \leq R_2/2 -G} \frac{1}{|\partial_u \Phi^X_t(x_0,u)|} \mathrm{d}x.
\end{align*} 
Now we need to bound the Jacobian appearing here, we recall that the system of equations definiting $\Phi^X, \Phi^V$ are
\[ \frac{\mathrm{d}}{\mathrm{d}t} \Phi^X_t = \Phi^V_t, \quad \frac{\mathrm{d}}{\mathrm{d}t} \Phi^V_t = - \nabla_x W(\Phi^X_t). \] We can differentiate with respect to $v$ to get,
\[ \frac{\mathrm{d}}{\mathrm{d}t} \partial_v \Phi^X_t = \partial_v \Phi^V_t, \quad \frac{\mathrm{d}}{\mathrm{d}t} \partial_v \Phi^V_t = - \mbox{Hess}(W)(\Phi^X_t)\partial_v \Phi^X_t. \] We can use this to get the differential inequality
\[ \frac{\mathrm{d}}{\mathrm{d}t} \left( |\partial_v \Phi^X_t|^2 + |\partial_v \Phi^V_t|^2\right) \leq \left(1+\|\mbox{Hess}(W)\|_\infty)\right) \left( |\partial_v \Phi^X_t|^2 + |\partial_v \Phi^V_t|^2\right). \] Therefore by Gr\"onwall's inequality we have
\[ \left( |\partial_v \Phi^X_t|^2 + |\partial_v \Phi^V_t|^2\right) \leq \exp \left( t (1+\|\mbox{Hess}(W)\|_\infty)\right) \left( |\partial_v \Phi^X_0|^2 + |\partial_v \Phi^V_0|^2\right). \] As $\partial_v \Phi^X_0 = 0$ and $\partial_v \Phi^V_0 =1$ therefore it follows that
\[ |\partial_v \Phi^X_{t}| \leq \exp \left( t(1+\|\mbox{Hess}(W)\|_\infty)\right).  \]
Now this gives the following lower bound
\[  \min_{x,v, t \in (0,1+T]} \frac{1}{|\partial_v \Phi^X_t(x,v)|} \geq \exp(-(T+1)(1+ \|\mbox{Hess}(W)\|_\infty)) =: \alpha,  \] and we choose $R_1, R_2$ so that $R_2/2-R_1-2G \geq 2$. This will mean that the anulus $\{x \,:\,R_1 +G \leq |x-x_0| \leq R_2/2 -G\}$ contains at least one unit square say with integer vertices $Q \subset \{x\,\;\,R_1 +G \leq |x-x_0| \leq R_2/2 -G\}$. Then we have
\begin{align*}
\int_{\mathbb{T}^d} \int_{\mathbb{R}^d} \psi(x) T_t\left( \delta_{x_0} \times 1_{R_1 \leq |v| \leq R_2} \right) \mathrm{d}v \mathrm{d}x \geq & \int_{1_{R_1 +G \leq |x-x_0| \leq R_2/2 -G}} \psi(x) \alpha \mathrm{d}x \\
\geq & \int_{Q} \psi(x) \alpha \mathrm{d}x \\
=& \int_{\mathbb{T}^d} \psi(x) \alpha \mathrm{d}x.
\end{align*} This means as measures on the torus, when $t \in (1/2,1)$ and $R_2/2-R_1-2G \geq 2$, we have that
\[ \int_{\mathbb{R}^d} T_t \left( \delta_{x_0} \times 1_{R_1 \leq |v| \leq R_2} \right) \mathrm{d}v \geq \alpha. \] 
Now we would like to get a similar result covering a much larger range of times. Before we do this we first show bounds on how the transport semigroup moves velocities, we show that if we start with large velocities after time $t$ we will still have mass in large velocities.  We can see that for any $x_0$, and $t \leq T$ then since $\Phi^V_t = v + t R(x,v,t)$, where $R(x,v,t)_i = \partial_{x_i}W(x,v,s)$ for some $s \in (0,t)$ we have
\[ 1_{R_3 \leq |v| \leq R_4} \geq 1_{R_3+GT \leq |\Phi^V_t(x_0,v)| \leq R_4-GT}. \] Therefore, taking another smooth, positive, bounded test function $\tilde{\psi}$ which is now a function of $x$ and $v$ and is still periodic in $x$ we have
\begin{align*} \int_{\mathbb{T}^d}&\int_{\mathbb{R}^d}\tilde{\psi}(x,v) T_t\left(\delta_{x_0} \times 1_{R_3 \leq |v| \leq R_4}\right) \mathrm{d}x \mathrm{d}v  = \int_{\mathbb{T}^d} \int_{\mathbb{R}^d} \tilde{\psi}(\Phi^X_t(x,v), \Phi^V_t(x,v)) \delta_{x_0}(x)1_{R_3 \leq |v| \leq R_4} \mathrm{d}x \mathrm{d}v \\
\geq & \int_{\mathrm{T}^d} \int_{\mathrm{R}^d} \tilde{\psi}(\Phi^X_t(x,v), \Phi^V_t(x,v)) \delta_{x_0}(x) 1_{R_3+GT \leq |\Phi^V_t(x,v)| \leq R_4-GT} \mathrm{d}x \mathrm{d}v \\
=& \int_{\mathbb{T}^d} \int_{\mathbb{R}^d} \tilde{\psi}(x,v) \delta_{x_0}(\Phi^X_{-t}(x,v))1_{R_3+GT \leq |v| \leq R_4-GT} \mathrm{d}x \mathrm{d}v.\end{align*} Here we used the transformation $(x,v) \rightarrow (\Phi^X_t(x,v), \Phi^V_t(x,v))$ first in one time direction and then backwards. Therefore we have for $t \leq T$ that as measures
\[ T_t \left( \delta_{x_0} \times 1_{R_3 \leq |v| \leq R_4 }\right) \geq \delta_{x_0}(\Phi^X_{-t}(x,v)) 1_{R_3+GT \leq |v| \leq R_4-GT}. \]
Now suppose we have $t \in [1/2, 1/2+T]$ we can write this as $t=s+r$ where $r \leq T$ and $s \in (1/2, 1)$ then we have
\begin{align*} T_t \left( \delta_{x_0} \times 1_{R_3 \leq |v| \leq R_4 }\right) =& T_s \left( T_r  \left( \delta_{x_0} \times 1_{R_3 \leq |v| \leq R_4 }\right) \right) \\
\geq & T_s \left( \delta_{x_0}(\Phi^X_{-r}(x,v)) 1_{R_3+GT \leq |v| \leq R_4 -GT} \right).\end{align*}
Now we want to do both these steps at the same time, first by using the semingroup property and rearranging
\begin{align*}
\int_{\mathbb{T}^d}& \int_{\mathbb{R}^d} \psi(x) T_t \left( \delta_{x_0} \times 1_{R_3 \leq |v| \leq R_4} \right) \mathrm{d}v \mathrm{d}x = \int_{\mathbb{T}^d} \int_{\mathbb{R}^d} \psi(\Phi^X_t(x,v)) \delta_{x_0}(x) 1_{R_3 \leq |v| \leq R_4} \mathrm{d}v \mathrm{d}x \\
=& \int_{\mathbb{R}^d} \psi(\Phi^X_t(x_0,v)) 1_{R_3 \leq |v| \leq R_4} \mathrm{d}v \\
=& \int_{\mathbb{R}^d} \psi(\Phi^X_s\left( \Phi^X_r(x_0,v), \Phi^V_r(x_0,v)\right) 1_{R_3 \leq |v| \leq R_4} \mathrm{d}v \\
=& \int_{\mathbb{T}^d} \int_{\mathbb{R}^d} ( \psi \circ \Phi^X_s )(x,v) T_r ( \delta_{x_0} \times 1_{R_3 \leq |v| \leq R_4})( \mathrm{d}x, \mathrm{d}v).
\end{align*}
This means we are now able to use the fact that as $r \leq T$
\[ T_r \left( \delta_{x_0} \times 1_{R_3 \leq |v| \leq R_4 }\right) \geq \delta_{x_0}(\Phi^X_{-r}(x,v)) 1_{R_3+GT \leq |v| \leq R_4-GT}. \]
We then substitute this in and continue to get
\begin{align*}
\int_{\mathbb{T}^d}& \int_{\mathbb{R}^d} \psi(x) T_t \left( \delta_{x_0} \times 1_{R_3 \leq |v| \leq R_4} \right) \mathrm{d}v \mathrm{d}x \geq \int_{\mathbb{T}^d} \int_{\mathbb{R}^d} ( \Phi^X_s \circ \psi)(x,v)\delta_{x_0}(\Phi^X_{-r}(x,v)) 1_{R_3+GT \leq |v| \leq R_4-GT}(\mathrm{d}x, \mathrm{d}v)\\
= & \int_{\mathbb{R}^d} \psi(\Phi^X_s \left( \Phi^X_r(x_0,v), \Phi^V_r(x_0,v) \right) 1_{R_3+GT \leq |\Phi^V_r(x_0,v)| \leq R_4-GT} \mathrm{d}v \\
\geq & \int_{\mathbb{R}^d}  \psi(\Phi^X_s \left( \Phi^X_r(x_0,v), \Phi^V_r(x_0,v) \right)1_{(R_3+GT)+G \leq |\Phi^X_t(x_0,v) -\Phi^X_r(x_0,v)| \leq (R_4-GT)/2 -G} \mathrm{d}v.
\end{align*} 
Here in the last line we used \eqref{eq:rbound}. Now let us write $F(v) = \Phi^X_t(x_0,v)$ and use the change of variables $x=F(v)$ then we have ($F^{-1}$ exists globally since $\partial_v F$ is bounded above and below by the argument given above),
\begin{align*}
\int_{\mathbb{T}^d} \int_{\mathbb{R}^d} \psi(x) T_t \left( \delta_{x_0} \times 1_{R_3 \leq |v| \leq R_4} \right) \mathrm{d}v \mathrm{d}x \geq & \int_{\mathbb{R}^d} \psi(x) \frac{1}{|\partial_u F(u)|} 1_{2(R_3+GT)+G \leq |x-\Phi^X_r(x_0, F^{-1}(x)| \leq (R_4-GT)/2 -G} \mathrm{d}x
\end{align*}
Now taking $\alpha$ from before and provided that $(R_4-GT)/2-2(R_3+GT)-2G \geq 2$ we will have as above that
\[ \int_{\mathbb{R}^d} T_t \left( \delta_{x_0} \times 1_{R_3 \leq |v| \leq R_4} \right) \mathrm{d}v \geq \alpha. \] We can choose specific values for $R_3,R_4$ we may as well choose $R_3=0$ and $R_4=4(1+G)+5GT$.

Lastly we want to extend from looking at anuluses to looking at $p(v',\cdot)$. We know that since $M$ is decreasing 
\[ p(v',v) \geq M(|v|) \geq M(R_4)1_{R_3 \leq |v| \leq R_4}.  \] Therefore,
\[ \int_{\mathbb{R}^d} T_t\left(\delta_{x_0} \times p(v',\cdot)\right) \geq M(4(1+G)+5GT) \int_{\mathbb{R}^d} T_t \left( \delta_{x_0} \times 1_{R_3 \leq |v| \leq R_4} \right) \mathrm{d}v \geq M(4(1+G)+5GT)\alpha. \] This concludes the proof.

\end{proof}

\begin{lemma}\label{lem:T4}
Assume that assumption \textbf{(R2)} is satisfied. Then, (\ref{hyp:u}) in the statement of Theorem \ref{thm:abstract} holds with 
\begin{equation*}
\beta_3 = \exp( - 4 \| m \|_{Lip} \| W \|_{\infty}  ), u_2 = M(|v|).
\end{equation*}
\end{lemma}

\begin{proof}
According to \textbf{(R2)}, $W \in C^2(\T^d)$ and $m \in Lip(\T^d)$. Therefore, the flow $(\Phi_t)_{t\geq 0}$ defined by (\ref{eq:characteristic equations}) satisfies the conservation law
\begin{equation*}
W(x) + \frac{1}{2}|v|^2 = W(\Phi_{-t}^X(x,v)) + \frac{1}{2}\left| \Phi_{-t}^V(x,v)  \right|^2, 
\end{equation*} for every $(x,v)\in \T^d \times \R^d$. Hence, 
\begin{equation*}
m\left( \left| \Phi_{-t}^V(x,v)  \right|^2 \right) \geq m(|v|^2) - 2\|m \|_{Lip} \| W - W(\Phi_{-t}^X(\cdot,v))  \|_{\infty} \geq m(|v|^2) - 4\|m \|_{Lip} \| W \|_{\infty}.
\end{equation*} Then we can take $u_2 = e^{-m(|v|)} \mathrm{d}v$. Finally, as  
\begin{equation*}
T_t(u_1 \otimes u_2) =  \exp\left(  m \left(   \left| \Phi_{-t}^V(x,v)  \right|^2   \right) \right),
\end{equation*} we deduce 
\begin{equation*}
T_t(u_1 \otimes u_2) \geq  \exp( - 4 \| m \|_{Lip} \| W \|_{\infty}  ) u_1 \otimes u_2.
\end{equation*}
We have now verified all the conditions of Theorem \ref{thm:abstract} to prove Corollary \ref{thm:BGK2}.
\end{proof}

\section{Comments on the rates}


Lastly we comment on the rates we get. For the main model our rate is 
\[ \lambda = - \frac{\log\left(1-\kappa^2 e^{-\|\sigma\|_\infty (2T+T_*)}/2\right)}{2T+T_*}.\] This is almost definitely not optimal. To the best of our knowledge the rate should vary quite strongly depending on the geometry. We can give a little bit of information about a bound on the spectral gap and examples of situations where the spectral gap is well below this bound. In \cite{HL14} the authors prove some results on the spectrum of this operator. Defining the constants 
\[ C^-_\infty = \sup_{T>0} \inf_{x,v} \frac{1}{T} \int_0^T \sigma(\Phi^X_t(x,v)) \mathrm{d}t , \quad C^+_\infty = \inf_{T>0} \sup_{x,v}  \frac{1}{T} \int_0^T \sigma(\Phi^X_t(x,v)) \mathrm{d}t,\] it is proven in \cite{HL14} that the essential spectrum of the linear Boltzmann operator lies in the strip $\{ z : C^-_\infty \leq \mbox{Re}(z) \leq C^+_\infty \}$. They also show that the spectrum is contained in a strip of the form $ \{ 0\leq \mbox{Re}(z) \leq L_\infty \}$,  where $L_\infty$ is related to the supremum of the collision kernel. We can give an upper bound on the spectral gap in total variation using a simple probabilistic argument. 
\begin{lemma}
Let $f_t$ be the unique measure valued solution to \eqref{eq:main} with initial data $f_0$. If there exists $\lambda>0, A>0$ such that for all $f_0$,
\[ \| f_t - \nu \|_{TV} \leq Ae^{-\lambda t}\| f_0 - \nu\|_{TV}, \] then $\lambda \leq C^+_\infty$ using the notation above. 
\end{lemma}
\begin{proof}
We use a stochastic process whose law follows the equation \eqref{eq:main}. Let us define a Poisson process with intensity $\|\sigma\|_\infty$ whose jump times are $I_1, I_2, \dots$ and generate a sequence of iid random variables $U_1, U_2, \dots$ who are uniform on $[0, \|\sigma\|_\infty]$. Then for $t \in [I_n, I_{n+1})$ we define $X_t = X_{I_n} +tV_{I_n}$ and $V_t = V_{I_n}$. Then when $t=I_{n+1}$ then if $U_n \leq \sigma(X_{I_{n+1}})$ we draw $V_{I_n}$ from the distribution $p(\cdot, V_{I_n})$ and if $U_n > \sigma(X_{I_n})$ then $V_{I_{n+1}} = V_{I_n}$. We can check that the law of $(X_t, V_t)$ will satisfy \eqref{eq:main} (which incidentally gives another proof of existence of solutions). The jumping process is a time inhomogeneous Poisson process with intensity $\sigma(X_t)$ and if we start at $(x_0, v_0)$ conditional on us not having jumped we have $\sigma(X_t) = \sigma( \Phi^X_t(x_0,v_0))$ therefore by standard result about inhomogeneous Poisson processes we have
\[ \| f_t - \nu \|_{TV} \geq \mathbb{P}( \mbox{jumped no times in time } t) = \exp\left( - \int_0^t \sigma(\Phi^X_s(x_0,v_0)) \mathrm{d}s \right). \] Fixing $\epsilon$ there exists $T(\epsilon)$ such that
\[ \sup_{x,v} \int_0^{T(\epsilon)}\sigma(\Phi^X_s(x,v))\mathrm{d}s \leq (C^+_\infty +\epsilon) T(\epsilon). \] Therefore, since we were taking the supremum over all $(x,v)$ we will have \[\int_{kT(\epsilon)}^{(k+1)T(\epsilon)} \sigma( \Phi_s (x,v)) \mathrm{d}s = \int_0^{T(\epsilon)} \sigma(\Phi_s( \Phi_{kT(\epsilon)}(x,v))) \mathrm{d}s \leq (C^+_\infty + \epsilon) T(\epsilon)\] and hence
\[ \| f(nT(\epsilon))- \nu \|_{TV} \geq \exp \left( - \int_0^{nT(\epsilon)} \sigma(\Phi^X_s(x,v)) \mathrm{d}s \right) \geq \exp\left(- nT(\epsilon)(C^+_\infty + \epsilon) \right), \] for every $n$. Therefore $\lambda \leq C^+_\infty + \epsilon$ and $\epsilon$ is arbitrary which gives the result.
\end{proof}

The consideration of optimal rates raises several natural further questions. The first is to investigate the optimal rates. Secondly it would be interesting to characterize which possible choices of $\sigma$ lead to the fastest and slowest rates. This is especially interesting since it is not obvious that having constant $\sigma$ gives the fasted rates, particularly in the presence of a confining potential. If it is possible to choose a degenerate $\sigma$ so that the convergence to equilibrium was much faster than the optimal choice of constant $\sigma$ then this could have implications for Hamiltonian Markov chain Monte-Carlo simulation. This is because discrete versions of this flow are used to sample from $e^{-W(x)}$ in HMCMC schemes and the intensity of the noise is generally chosen to be constant. If a spatially varying $\sigma$ could increase the convergence speed of this continuous flow the same might be true for the HMCMC schemes.

\bibliographystyle{plain}

 \end{document}